\documentclass[11pt]{amsart}
\usepackage{graphicx,color}

 \setlength{\voffset}{0 cm} 
 \setlength{\oddsidemargin}{-0.5cm} \setlength{\evensidemargin}{-0.5cm}
 \setlength{\textwidth}{17cm} \setlength{\textheight}{21.2cm}

\theoremstyle{plain}

\theoremstyle{plain}
\newtheorem{theorem}{Theorem} [section]

\newtheorem{lemma}[theorem]{Lemma}
\newtheorem{proposition}[theorem]{Proposition}

\theoremstyle{definition}

\theoremstyle{remark}

\numberwithin{theorem}{section}
\numberwithin{equation}{section}
\numberwithin{figure}{section}

\def\mean#1{\mathchoice
         {\mathop{\kern 0.2em\vrule width 0.6em height 0.69678ex depth -0.58065ex
                 \kern -0.8em \intop}\nolimits_{\kern -0.4em#1}}%
         {\mathop{\kern 0.1em\vrule width 0.5em height 0.69678ex depth -0.60387ex
                 \kern -0.6em \intop}\nolimits_{#1}}%
         {\mathop{\kern 0.1em\vrule width 0.5em height 0.69678ex
             depth -0.60387ex
                 \kern -0.6em \intop}\nolimits_{#1}}%
         {\mathop{\kern 0.1em\vrule width 0.5em height 0.69678ex depth -0.60387ex
                 \kern -0.6em \intop}\nolimits_{#1}}}

\def\N{\mathbb N}

\def\R{\mathbb R}

\def\g{\gamma}

\def\e{\varepsilon}

\def\l{\lambda}
\def\L{\Lambda}
\def\O{\Omega}

\title[Stability for the Monge-Amp\`ere equation
and strong convergence of optimal maps]{Second order stability for the Monge-Amp\`ere equation\\
and strong Sobolev convergence of optimal transport maps}

\author[G. De Philippis]{Guido De Philippis}
\address{Scuola Normale Superiore,
p.za dei Cavalieri 7, I-56126 Pisa, Italy}
\email{guido.dephilippis@sns.it}

\author[A. Figalli]{Alessio Figalli}
\address{Department of Mathematics,
The University of Texas at Austin, 1 University Station C1200,
Austin TX 78712, USA}
\email{figalli@math.utexas.edu}

\keywords{}

\begin{document}

\begin{abstract}
The aim of this note is to show that Alexandrov solutions
of the Monge-Amp\`ere equation,
with right hand side bounded away from zero and infinity, converge strongly in $W^{2,1}_{\rm loc}$
if their right hand side converge strongly in $L^1_{\rm loc}$.
As a corollary we deduce strong $W^{1,1}_{\rm loc}$ stability of optimal transport maps.
\end{abstract}

\maketitle

\section{Introduction}
Let $\Omega\subset \R^n$ be a bounded convex domain.
Recently \cite{DepFi} the authors showed that convex Alexandrov solutions of
\begin{equation}
\label{MA}
\begin{cases}
\det D^2 u=f \quad &\text{in $\Omega$}\\
u=0 &\text{on $\partial \Omega$},
\end{cases}
\end{equation}
with  $0<\lambda \le f \le \Lambda$, are $W^{2,1}_{\rm loc}(\Omega)$.
More precisely, they were able to prove uniform interior $L\log L$-estimates for $D^2u$.
This result has also been improved in \cite{DFS,S}, where it is actually shown that $u \in W^{2,\g}_{\rm loc}(\Omega)$
for some $\g=\g(n,\l,\L)>1$: more precisely, for any $\Omega'\subset \subset \Omega$,
\begin{equation}\label{ulogu}
\int_{\Omega'} |D^2 u|^{\g}\le C(n,\l,\L,\O,\O').
\end{equation}


A question which naturally arises in view of the previous results is the following:
choose a sequence of functions $f_k$ with $\lambda \le f_k \le \Lambda$ which converges to $f$ strongly in $L^1_{\rm loc}(\Omega)$,
and denoted by $u_k$ and $u$ the solutions of \eqref{MA} corresponding to $f_k$ and $f$ respectively.
By the convexity of $u_k$ and $u$, and the uniqueness of solutions to \eqref{MA},
it is immediate to deduce that $u_k \to u$ uniformly,
and $\nabla u_k \to \nabla u$ in $L^{p}_{\rm loc}(\Omega)$ for any $p<\infty$.
What can be said about the strong convergence of $D^2 u_k$?
Due to the highly nonlinear character of the Monge-Amp\`ere equation, this question is nontrivial.
(Note that weak $W^{2,1}_{\rm loc}$ convergence is immediate by compactness, even under the weaker
assumption that $f_k$ converge to $f$ weakly in $L^1_{\rm loc}(\Omega)$.)
 
The aim of this short note is to prove that actually strong convergence holds. In fact our main result is the following:

\begin{theorem}\label{conv} 
Let $\Omega_k\subset \R^n$ be a  family of convex domains, and  let $u_k:\Omega_k \to \R$ be convex Alexandrov solutions of
\begin{equation}
\label{MA_k}
\begin{cases}
\det D^2 u_k=f_k \quad &\text{in $\Omega_k$}\\
u_k=0 &\text{on $\partial \Omega_k$}
\end{cases}
\end{equation}
with  $0<\lambda \le f_k \le \Lambda$. Assume that $\Omega_k$ converge to some convex domain $\Omega$
in the Hausdorff distance, and $f_k \chi_{\Omega_k}$ converge to $f$ in $L^1_{\rm loc}(\Omega)$. Then, if $u$
denotes the unique Alexandrov solution of 
\[
\begin{cases}
\det D^2 u=f \quad &\text{in $\Omega$}\\
u=0 &\text{on $\partial \Omega$},
\end{cases}
\] 
for any $\Omega'\subset\subset  \Omega$ we have
\begin{equation}\label{L1econv}
\|u_k-u\|_{W^{2,1}(\Omega')}\to 0 \qquad \text{as $k \to \infty$}.
\end{equation}
(Obviously, since the functions $u_k$ are uniformly bounded in $W^{2,\g}(\Omega')$,
this gives strong convergence in $W^{2,\g'}(\Omega')$ for any $\g'<\g$.)
\end{theorem}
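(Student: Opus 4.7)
The plan is to reduce \eqref{L1econv} to strong $L^1$ convergence of $D^2u_k$ via Vitali's theorem, using the uniform $W^{2,\g}$ estimate \eqref{ulogu} for equi-integrability, and then to obtain the required pointwise convergence by exploiting the concavity of $\det^{1/n}$ on the convex midpoints $v_k:=(u_k+u)/2$, together with the weak continuity of the Monge--Amp\`ere operator under uniform convergence of convex functions. Concretely, fix $\O'\subset\subset\O''\subset\subset\O$; for $k$ large one has $\O''\subset \O_k$ and \eqref{ulogu} gives $\int_{\O''}|D^2u_k|^\g\le C$, so $\{|D^2u_k|\}$ is equi-integrable on $\O'$. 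Weak compactness together with uniqueness of the limit equation force $D^2u_k\rightharpoonup D^2u$ in $L^\g(\O')$ for the full sequence, and by Vitali it suffices to prove convergence in measure on $\O'$; by a standard subsequence argument this reduces to showing that every subsequence of $\{D^2u_k\}$ admits a further subsequence converging a.e.\ to $D^2u$.

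The function $v_k=(u_k+u)/2$ is strictly convex (each summand is, by Caffarelli's theory), lies in $W^{2,\g}_{\rm loc}$, and converges uniformly to $u$; concavity of $M\mapsto(\det M)^{1/n}$ on symmetric positive matrices gives
\[
\det D^2 v_k\ \ge\ g_k\ :=\ \frac{1}{2^n}\bigl(f_k^{1/n}+f^{1/n}\bigr)^n \qquad\text{a.e.\ on $\O'$.}
\]
Since $v_k$ is strictly convex and in $W^{2,\g}$, its Monge--Amp\`ere measure equals $\det D^2 v_k\,dx$, and the classical weak continuity of the Monge--Amp\`ere operator under uniform convergence gives $\det D^2 v_k\,dx\rightharpoonup f\,dx$ as Radon measures; dominated convergence (using $\l\le f_k\le \L$ and $f_k\to f$ in $L^1_{\rm loc}$) gives $g_k\to f$ in $L^1_{\rm loc}$. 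Testing the weak convergence against a cutoff equal to $1$ on $\O'$ and using that $\det D^2 v_k-g_k\ge 0$, one deduces $\int_{\O'}(\det D^2 v_k-g_k)\to 0$, hence $\det D^2 v_k\to f$ in $L^1(\O')$ and $\det D^2 v_k-g_k\to 0$ in $L^1(\O')$.

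The final step---passing from scalar convergence of $\det D^2 v_k$ to matrix convergence of $D^2u_k$---is where the main difficulty lies. Strict concavity of $\det^{1/n}$ on the positive cone (equality in $\det^{1/n}((A+B)/2)\ge\tfrac12(\det^{1/n}A+\det^{1/n}B)$ between matrices with $\det A=\det B$ forces $A=B$), combined with the a.e.\ limit $\det D^2 u_k(x)=f_k(x)\to f(x)=\det D^2 u(x)$, should yield $D^2u_k(x)\to D^2u(x)$ pointwise a.e.\ along the chosen subsequence. The subtlety is that the matrices $D^2u_k(x)$ are only controlled in $L^\g$ and not pointwise, so a bare compactness argument is insufficient: one has to invoke a quantitative form of the strict concavity valid on bounded subsets of the positive cone (bounded away from zero) and combine it with a truncation based on equi-integrability---discarding a set of small measure on which $|D^2u_k|+|D^2u|$ is large, applying the quantitative inequality on the rest, and letting the truncation level tend to infinity. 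Once a.e.\ convergence of $D^2u_k$ to $D^2u$ is established on $\O'$, equi-integrability upgrades it via Vitali to strong $L^1(\O')$ convergence, which gives \eqref{L1econv}.
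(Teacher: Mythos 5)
Your argument is correct in outline, but it follows a genuinely different route from the paper. The paper fixes $\e\in(0,1)$, takes the convex envelope $\Gamma_{u-(1-\e)u_k}$ in $\Omega'$ and, via Lemma \ref{conenv} and Proposition \ref{uni}, shows that the contact set fills $\Omega'$ in measure; there one gets the matrix ordering $D^2u\ge(1-\e)D^2u_k$ a.e., the symmetric statement gives the sandwich $(1-\e)D^2u_k\le D^2u\le(1-\e)^{-1}D^2u_k$ off a set of vanishing measure, and H\"older together with \eqref{ulogu} concludes. You instead work with the midpoint $v_k=(u_k+u)/2$ and turn the problem into a quantitative equality-case analysis for the concavity of $\det^{1/n}$: the deficit $\det D^2v_k-g_k\ge0$ has vanishing integral on $\Omega'$, and since $\det D^2u_k=f_k\to f=\det D^2u$ a.e.\ along subsequences, a rigidity argument on the truncated sets $\{|D^2u_k|,|D^2u|\le M\}$ yields convergence in measure, which Vitali upgrades to $L^1$. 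This can indeed be completed: on the truncated set the constraint $\det\ge\l$ forces the smallest eigenvalue to be at least $\l/M^{n-1}$, so the pairs of Hessians range in a compact subset of the open positive cone, where $(A,B)\mapsto \det\bigl(\tfrac{A+B}{2}\bigr)-\bigl(\tfrac{(\det A)^{1/n}+(\det B)^{1/n}}{2}\bigr)^n+|\det A-\det B|$ vanishes only for $A=B$ (equality in the concavity inequality forces proportionality, and equal determinants force the factor to be one); Chebyshev plus the uniform bound $|\{|D^2u_k|>M\}|\le CM^{-\g}$ then gives convergence in measure. Your route trades the paper's convex-envelope/contact-set machinery for a compactness--rigidity argument; the paper's proof is more quantitative and self-contained, yours is arguably more conceptual, but the rigidity step is the heart of the matter and must be written out, not just invoked.

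One correction is needed: the claim that strict convexity and $W^{2,\g}$ regularity of $v_k$ imply $\mu_{v_k}=\det D^2v_k\,dx$ is not justified -- Sobolev regularity of a convex function does not by itself rule out a singular part of its Monge--Amp\`ere measure, and $v_k$, unlike $u_k$ and $u$, is not an Alexandrov solution of an equation with absolutely continuous right-hand side. Fortunately you only need the one-sided bound $\mu_{v_k}\ge\det D^2v_k\,dx$, which holds for every convex function (the absolutely continuous part of the Monge--Amp\`ere measure has density equal to the determinant of the Alexandrov Hessian a.e.). With this inequality, testing Proposition \ref{uni} with a cutoff $\varphi\ge{\mathbf 1}_{\overline{\Omega'}}$ gives $\limsup_k\int_{\Omega'}(\det D^2v_k-g_k)\le\int(\varphi-{\mathbf 1}_{\Omega'})f$, which tends to $0$ as the support of $\varphi$ shrinks to $\overline{\Omega'}$ (take $\Omega'$ with negligible boundary), so your conclusion $\det D^2v_k-g_k\to0$ in $L^1(\Omega')$ survives. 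For the same reason the pointwise identities $\det D^2u_k=f_k$ and $\det D^2u=f$ a.e., which you use both for the lower bound $\det D^2v_k\ge g_k$ and in the rigidity step, are legitimate, since the Monge--Amp\`ere measures of $u_k$ and $u$ are absolutely continuous.
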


As a consequence of the previous theorem we can prove the following stability result for optimal transport maps:
\begin{theorem}\label{transport}
Let $\O_1,\O_2\subset \R^n$ be two bounded domains with $\Omega_2$ convex, and let $f_k,g_k$ be a family of probability densities
such that $0<\l \leq f_k,g_k \le \Lambda$ inside $\O_1$ and $\O_2$ respectively.
Assume that $f_k \to f$ in $L^1(\Omega_1)$ and $g_k \to g$ in $L^1(\Omega_2)$,
and let $T_k:\O_1\to \O_2$ (resp. $T:\O_1\to \O_2$) be the (unique) optimal transport map for the
quadratic cost sending $f_k$ onto $g_k$ (resp. $f$ onto $g$).
Then $T_k\to T$ in  $  W^{1,\g'}_{\rm loc}(\O_1)$ for some $\g'>1$.
\end{theorem}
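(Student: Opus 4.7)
My plan is to reduce Theorem~\ref{transport} to Theorem~\ref{conv} via a localization argument on sections of the Brenier potentials.

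\textbf{Reduction to Monge--Amp\`ere.} By Brenier's theorem the optimal maps are gradients of convex potentials, $T_k=\nabla u_k$ and $T=\nabla u$, and the push-forward condition $(\nabla u_k)_{\#}(f_k\mathcal L)=g_k\mathcal L$ translates into the Alexandrov Monge--Amp\`ere equation
\[
\det D^2 u_k = F_k:=\frac{f_k}{g_k\circ\nabla u_k}\quad\text{on $\O_1$},\qquad \frac{\l}{\L}\le F_k\le \frac{\L}{\l},
\]
and analogously $\det D^2 u=F:=f/(g\circ\nabla u)$. Standard stability of optimal transport under $L^1$ convergence of the marginals yields $u_k\to u$ uniformly on compact subsets of $\O_1$ (after fixing the additive constant) and $\nabla u_k\to \nabla u$ in $L^p_{\rm loc}(\O_1)$ for every $p<\infty$. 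Since $\O_2$ is convex and $g$ is bounded between $\l$ and $\L$, Caffarelli's regularity theory implies that $u$ is strictly convex and $C^{1,\a}_{\rm loc}$ in $\O_1$.

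\textbf{Localization on sections.} Fix $x_0\in\O_1$; I will establish $W^{1,\g'}$ convergence in a neighborhood of $x_0$. By strict convexity of $u$, for $h>0$ small enough the section
\[
S_h:=\bigl\{x\in\O_1:u(x)<u(x_0)+\nabla u(x_0)\cdot(x-x_0)+h\bigr\}
\]
is compactly contained in $\O_1$. Uniform convergence of $u_k$ to $u$ together with $\nabla u_k(x_0)\to \nabla u(x_0)$ imply that the analogous sections $S_h^k$ of $u_k$ are well defined for $k$ large and converge to $S_h$ in the Hausdorff distance. On $S_h^k$ the normalized potential $v_k(x):=u_k(x)-u_k(x_0)-\nabla u_k(x_0)\cdot (x-x_0)-h$ is a convex Alexandrov solution of $\det D^2 v_k=F_k$ vanishing on $\partial S_h^k$, and likewise for $v$ on $S_h$.

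\textbf{Convergence of $F_k$ and conclusion.} The key analytic step is to prove $F_k\to F$ in $L^1(S_h)$. Once this is in place, Theorem~\ref{conv} applied to $v_k$ on $S_h^k$ gives $v_k\to v$ in $W^{2,1}_{\rm loc}(S_h)$, which combined with the uniform bound \eqref{ulogu} upgrades to $W^{2,\g'}_{\rm loc}$ for every $\g'<\g$; covering $\O_1$ by finitely many such sections yields $T_k=\nabla u_k\to \nabla u=T$ in $W^{1,\g'}_{\rm loc}(\O_1)$. To obtain the $L^1$ convergence of $F_k$ I would decompose
\[
F_k-F = \frac{f_k-f}{g_k\circ \nabla u_k} + \frac{f\,\bigl(g\circ\nabla u - g_k\circ\nabla u_k\bigr)}{(g_k\circ\nabla u_k)(g\circ\nabla u)};
\]
the first piece tends to zero in $L^1(\O_1)$ by hypothesis. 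For the second one, the push-forward identity and $f_k\ge\l$ give the key inequality
\[
\int_{\O_1} (\psi\circ\nabla u_k)\,dx \le \frac{1}{\l}\int_{\O_2} \psi\, g_k\,dy \le \frac{\L}{\l}\|\psi\|_{L^1(\O_2)}\qquad \forall\,\psi\ge 0,
\]
which allows me to replace $g_k\circ\nabla u_k$ by $g\circ\nabla u_k$ with error controlled by $\|g_k-g\|_{L^1(\O_2)}\to 0$. The remaining difference $g\circ\nabla u_k - g\circ\nabla u$ is handled by approximating $g$ in $L^1(\O_2)$ by a continuous function $\tilde g$: the errors $g-\tilde g$ are absorbed by the push-forward inequality applied to both $u_k$ and $u$, while $\tilde g\circ\nabla u_k\to \tilde g\circ\nabla u$ in $L^1_{\rm loc}$ by dominated convergence, using that $\nabla u_k\to\nabla u$ a.e.\ along a subsequence.

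\textbf{Main obstacle.} The delicate point is precisely the $L^1$ convergence of the right-hand sides $F_k$: the Monge--Amp\`ere equation for the Brenier potential involves the composition $g_k\circ\nabla u_k$ of a merely $L^\infty$ density with a map that converges only in $L^p$, and such compositions are not generically continuous. The combination of the push-forward identity with a density approximation of $g$ by continuous functions is what bypasses this lack of continuity and makes Theorem~\ref{conv} applicable on each section.
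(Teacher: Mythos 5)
Your proposal is correct and follows essentially the same route as the paper: write $T_k=\nabla u_k$, localize on sections of the potentials that converge in Hausdorff distance, prove $f_k/(g_k\circ T_k)\to f/(g\circ T)$ in $L^1$ via the push-forward identity and approximation of $g$ by continuous functions (this is exactly the paper's Lemma \ref{lemma:conv rhs}), and then apply Theorem \ref{conv} together with the uniform $W^{2,\gamma}$ bounds to upgrade to $W^{1,\gamma'}_{\rm loc}$ convergence. The only cosmetic difference is that you use the canonical sections at $x_0$ (relying on $\nabla u_k(x_0)\to\nabla u(x_0)$), whereas the paper cuts with a fixed affine function transversal to $\nabla u$ on the boundary of the section.
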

We point out that, in order to prove \eqref{L1econv} and the local $W^{1,1}$ stability of optimal transport maps,
the interior $L\log L$-estimates from \cite{DepFi} are sufficient. Indeed, the $W^{2,\gamma}$-estimates
are used just to improve the convergence from $W^{2,1}_{\rm loc}$ to $W^{2,\g'}_{\rm loc}$ with $\g'<\g$.

The paper is organized as follows: in the next section we collect some notation and preliminary results.
Then in Section \ref{sect:thm} we prove Theorem \ref{conv}, and in Section \ref{sect:cor} we prove Theorem \ref{transport}.
\\

\textit{Acknowledgments:} We thank Luigi Ambrosio for stimulating our interest on this problem.
We also acknowledge the hospitality at the University of Nice during the workshop ``Geometry meets transport'',
where part of the present work has been done.
AF has been partially supported by NSF Grant DMS-0969962.
Both authors acknowledge the support of the ERC ADG Grant GeMeThNES.

\section{Notation and preliminaries}
\label{sec:notation}

Given a convex function $u:\Omega \to \R$, we define its \textit{Monge-Amp\`ere measure} as 
\[
\mu_u(E):=|\partial u(E)| \qquad \forall\, E \subset \Omega\text{ Borel}
\]
(see \cite[Theorem 1.1.13]{G}), where 
\[
\partial u(E):=\bigcup_{x\in E}\partial u(x).
\]
Here $\partial u(x)$ is the subdifferential of $u$ at $x$,
and $|F|$ denotes the Lebesgue measure of a set $F$.
In case $u\in C_{\rm loc}^{1,1}$, by the Area Formula \cite[Paragraph 3.3]{EG} the following representation holds:
\[
\mu_u= \det D^2 u \,dx.
\]
The main property of Monge-Amp\`ere measure we are going to use is the following (see \cite[Lemma 1.2.2 and Lemma 1.2.3]{G}):
\begin{proposition}\label{uni}
Let $u_k:\Omega \to \R$ be a sequence of convex functions converging locally uniformly to $u$.
Then the associated Monge-Amp\`ere measures $\mu_{u_k}$ converges to $\mu_u$ in duality with
the space of continuous functions compactly supported in $\Omega$. In particular
\[
\mu_u(A)\le \liminf_{k\to \infty}\mu_{u_k}(A)
\]
for any open set $A \subset \Omega$.
\end{proposition}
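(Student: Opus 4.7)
The plan is to establish the two standard Portmanteau-type inequalities
$\limsup_k \mu_{u_k}(K) \le \mu_u(K)$ for every compact $K \subset \O$ and $\liminf_k \mu_{u_k}(A) \ge \mu_u(A)$ for every open $A \subset \O$; together with uniform local boundedness of the $\mu_{u_k}$, these imply vague convergence in duality with $C_c(\O)$, while the second inequality is already the ``in particular'' statement. As a preliminary remark, locally uniform convergence of convex functions yields uniform local Lipschitz bounds (any locally bounded convex function is locally Lipschitz), so $\partial u_k(K)$ is contained in a fixed ball $B_{L_K}(0)$ and the measures $\mu_{u_k}$ are uniformly locally finite.

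The workhorse for the upper bound is the closure property of the subdifferential: if $x_j \to x \in \O$, $p_j \in \partial u_{k_j}(x_j)$, and $p_j \to p$, then passing to the limit in the inequality $u_{k_j}(y) \ge u_{k_j}(x_j) + p_j \cdot (y - x_j)$ gives $p \in \partial u(x)$. Fix $K\subset\O$ compact and $\e>0$; the closure property together with the uniform Lipschitz bound implies that the $\e$-neighborhood $N_\e(\partial u(K))$ contains $\partial u_k(K)$ for all large $k$, for otherwise a diagonal extraction would produce a limit subgradient of $u$ at a point of $K$ at distance $\ge \e$ from $\partial u(K)$. Since the same closure property makes $\partial u(K)$ closed and bounded, $|N_\e(\partial u(K))| \downarrow |\partial u(K)| = \mu_u(K)$ as $\e \to 0$, giving the $\limsup$ bound.

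The delicate step is the lower bound on open sets, which I would handle by convex duality. Let $u^*$ denote the Legendre transform of $u$; by Rademacher's theorem $u^*$ is differentiable a.e.\ on $\R^n$, and at any such $p$ there is a unique $x = \nabla u^*(p)$ with $p \in \partial u(x)$. Setting $E := \{p \in \partial u(A) : u^* \text{ is differentiable at } p,\ \nabla u^*(p)\in A\}$, the complement $\partial u(A) \setminus E$ is Lebesgue-null (it sits inside the non-differentiability set of $u^*$), so $\mu_u(A) = |E|$. For $p \in E$ with $x := \nabla u^*(p)$, pick $r > 0$ with $\overline{B_r(x)} \subset A$: the concave function $y \mapsto p \cdot y - u(y)$ attains its maximum on $\overline{B_r(x)}$ only at the interior point $x$ (by uniqueness of the subgradient preimage), and $y \mapsto p \cdot y - u_k(y)$ converges to it uniformly, hence the maximizers $x_k$ converge to $x$ and eventually lie inside $B_r(x) \subset A$, which forces $p \in \partial u_k(x_k) \subset \partial u_k(A)$. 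Consequently $E \subset \liminf_k \partial u_k(A)$, and the elementary estimate $|\liminf_k S_k| \le \liminf_k |S_k|$ (Fatou applied to indicator functions) yields $\mu_u(A) = |E| \le \liminf_k \mu_{u_k}(A)$. The main obstacle is precisely this open-set inequality: one must discard the measure-zero ``multivalued'' set of subgradients reached by $u$ at several points, because only at uniqueness points can the subgradient be traced back through the approximating $u_k$ while staying inside $A$.
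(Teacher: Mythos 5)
Your proof is correct, and it is essentially the standard argument behind this statement: the paper itself offers no proof, referring instead to Lemmas 1.2.2 and 1.2.3 of Gutierrez's book, whose proof runs exactly along your lines --- upper bound on compact sets via the closure property of subdifferentials under locally uniform convergence, and lower bound on open sets by discarding the Lebesgue-null set of slopes attained at more than one point and tracing each remaining slope through the maximizers of $y\mapsto p\cdot y-u_k(y)$. Your use of Rademacher's theorem for $u^*$ to see that this exceptional set is null is the same mechanism as \cite[Lemma 1.1.12]{G}, which the paper itself invokes (via the nondifferentiability set of the convex conjugate) in the footnote proving \eqref{misconv}.
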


Given a Radon measure $\nu$ on $\R^n$ and a bounded convex domain $\Omega \subset \R^n$,
we say that a convex function $u:\Omega \to \R$ is an \textit{Alexandrov solution} of the Monge-Amp\`ere equation
$$
\det D^2 u=\nu \quad \text{in }\Omega
$$
if $\mu_u(E)= \nu(E)$ for every Borel set $E\subset \Omega$.\\

If $v:\overline \Omega \to \R$ is a continuous function, 
we define its \textit{convex envelope inside $\Omega$} as 
\begin{equation}
\label{eq:convex envelope}
\Gamma_v(x):=\sup\{ \ell(x)\,:\, \ell \leq v \text{ in }\Omega,\,
\, \ell \text{ affine}\}.
\end{equation}
In case $\Omega$ is a  convex domain and
$v \in C^2(\Omega)$, it is easily seen that 
\begin{equation}
\label{eq:order symm matrices}
 D^2v(x)\ge 0 \qquad \text{for every } x\in \{v=\Gamma_v\}\cap\Omega 
\end{equation}
in the sense of symmetric matrices. Moreover the following inequality between measures holds in $\Omega$:
\begin{equation}\label{misconv}
\mu_{\Gamma_v} \le \det D^2v {\mathbf 1}_{\{v=\Gamma_v\}}\,dx,
\end{equation}
(here ${\mathbf 1}_E$ is the characteristic function of a set $E$).\footnote{To see this, let us first recall that
by \cite[Lemma 6.6.2]{G}, if $x_0\in  \Omega\setminus \{\Gamma_v=v\}$ and $a\in \partial \Gamma_v(x_0)$, then  the convex set 
\[
\{ x\in \Omega:\Gamma_v(x)=a\cdot (x-x_0)+\Gamma_v(x_0)\}
\]
is nonempty and contains more than one point. In particular
\[
\partial \Gamma_v\bigl(\Omega\setminus \{\Gamma_v=v\}\bigr)\subset\{ p\in \R^n \,:\,\text{there exist $x, y \in  \Omega$, $x\ne y$ and
 $p \in \partial \Gamma_v(x)\cap\partial \Gamma_v(y)$}\}.
\]
This last set is  contained in the set of nondifferentiability of the convex conjugate of $\Gamma_v$,
so it has zero Lebesgue measure (see \cite[Lemma 1.1.12]{G}), hence
\begin{equation}
\label{eq:meas zero}
\bigl|\partial \Gamma_v\bigl(\Omega\setminus \{\Gamma_v=v\}\bigr) \bigr| =0.
\end{equation}
Moreover, since $v\in C^1( \Omega)$, for any $x\in \{\Gamma_v=v\}\cap \Omega$ it holds $\partial \Gamma_v (x)=\{\nabla v(x)\}$.
Thus, using \eqref{eq:meas zero}
and \eqref{eq:order symm matrices}, for any open set $A\subset\subset \Omega$ we have
\[
\begin{split}
\mu_{\Gamma_v}(A)&= \bigl|\partial\Gamma_v\bigl(A\cap \{\Gamma_v=v\}\bigr)\bigr|
=\bigl|\nabla v\bigl(A\cap \{\Gamma_v=v\}\bigr)\bigr|\\
&\le \int_{A\cap \{\Gamma_v=v\}}|\det D^2 v|=\int_{A\cap \{\Gamma_v=v\}}\det D^2 v,
\end{split}
\]
as desired.
(The inequality above follows from the Area Formula in \cite[Paragraph 3.3.2]{EG} applied to the $C^1$ map $\nabla v$.)}\\

We recall that a continuous function $v$ is said to be 
\textit{twice differentiable} at $x$ if there exists a (unique) vector $\nabla v(x)$ and a
(unique) symmetric matrix $\nabla^2 v(x)$ such that
\[
v(y)=v(x)+\nabla v(x)\cdot (y-x)+\frac{1}{2}\nabla^2 v(x)[y-x,y-x]+o(|y-x|^2).
\]
In case $v$ is twice differentiable at some point $x_0 \in \{v=\Gamma_v\}$,
then it is immediate to check that
\begin{equation}
\label{eq:order symm matrices2}
\nabla^2 v(x_0) \geq 0.
\end{equation}
By Alexandrov Theorem, any convex function is twice differentiable almost everywhere
(see for instance \cite[Paragraph 6.4]{EG}).
In particular, \eqref{eq:order symm matrices2} holds almost everywhere on $\{v=\Gamma_v\}$
whenever $v$
is the difference of two convex functions.\\

Finally we recall that, in case $v\in W^{2,1}_{\rm loc}$, then the pointwise Hessian of $v$ coincides almost everywhere with its 
distributional Hessian \cite[Sections 6.3 and 6.4]{EG}.
Since in the sequel we are going to deal with $W^{2,1}_{\rm loc}$ convex functions,
we will use $D^2u$ to denote both the pointwise and the distributional Hessian.

\section{Proof of Theorem \ref{conv}}\label{sect:thm}

We are going to use the following result:
\begin{lemma}\label{conenv}
Let $\Omega\subset \R^n$ be a bounded convex domain,
and let  $u,v:\overline\Omega \to \R$ be two continuous strictly convex functions such that
$\mu_u=f\,dx$ and $\mu_v=g\,dx$, with $f,g \in L^1_{\rm loc}(\Omega)$. Then
\begin{equation}\label{gammauv}
\mu_{\Gamma_{u-v}}\le \Big( f^{1/n}-g^{1/n}\Big)^n{\mathbf 1}_{\{u-v=\Gamma_{u-v}\}}\, dx.
\end{equation}
\end{lemma}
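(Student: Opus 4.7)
The overall strategy is to mimic the footnote's argument (originally stated for $v \in C^2$) with $w := u - v$ playing the role of $v$, and then refine the resulting pointwise integrand estimate via Minkowski's determinant inequality. The plan proceeds in three stages: (i) show that
$$\mu_{\Gamma_w}(A) \le |\partial \Gamma_w(A \cap \{w = \Gamma_w\})| \qquad \text{for every open } A \subset\subset \Omega;$$
(ii) compute (or bound) this image using the a.e.\ twice differentiability of $u$ and $v$ together with the Area Formula; (iii) bound the resulting integrand pointwise by $(f^{1/n} - g^{1/n})^n$.

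Step (i) follows essentially verbatim from the footnote: its derivation relies only on \cite[Lemma 6.6.2]{G} together with convexity of the envelope, neither of which depends on any regularity of $w$ itself. For step (ii), by Alexandrov's theorem each of $u, v$ is pointwise twice differentiable a.e., and the pointwise Hessians satisfy $\det \nabla^2 u = f$ and $\det \nabla^2 v = g$ a.e.\ (a standard consequence of absolute continuity of $\mu_u$ and $\mu_v$). Let $E$ be the full-measure subset of $A \cap \{w = \Gamma_w\}$ on which all of the above hold. At any $x_0 \in E$ and any $p \in \partial \Gamma_w(x_0)$ the chain of inequalities
$$w(y) \ge \Gamma_w(y) \ge \Gamma_w(x_0) + p \cdot (y - x_0) = w(x_0) + p \cdot (y - x_0)$$
combined with the second-order Taylor expansion of $w$ at $x_0$ forces $p = \nabla w(x_0)$ and $\nabla^2 w(x_0) \ge 0$. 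The Area Formula for pointwise differentiable maps \cite[Paragraph 3.3]{EG} applied to $\nabla w$ on $E$ then yields
$$|\partial \Gamma_w(E)| = |\nabla w(E)| \le \int_E \det \nabla^2 w \, dx.$$

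For (iii), on $E$ we have the matrix identity $\nabla^2 u = \nabla^2 v + \nabla^2 w$ with all three matrices nonnegative, so Minkowski's determinant inequality $\det(A + B)^{1/n} \ge \det(A)^{1/n} + \det(B)^{1/n}$ (valid for $A, B \ge 0$) gives $f^{1/n} \ge g^{1/n} + (\det \nabla^2 w)^{1/n}$ a.e.\ on $E$, i.e.\ $\det \nabla^2 w \le (f^{1/n} - g^{1/n})^n$. Chaining (i)--(iii) and using the arbitrariness of the open set $A$ yields \eqref{gammauv}. The main delicacy I expect lies in justifying step (ii) cleanly: one must verify that the negligible complement $A \cap \{w = \Gamma_w\} \setminus E$ contributes nothing to $|\partial \Gamma_w(\cdot)|$. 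This can be handled either directly, by exploiting that $|\partial u(N)| = \int_N f$ and $|\partial v(N)| = \int_N g$ vanish on any null set $N$ together with the inclusion $\partial \Gamma_w(x_0) + \partial v(x_0) \subseteq \partial u(x_0)$ valid on the contact set, or, more transparently, by first smoothing $u$ and $v$ to reduce to the $C^2$ case covered by the footnote and then passing to the limit via Proposition \ref{uni}.
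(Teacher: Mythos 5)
Your overall skeleton (envelope measure concentrated on the contact set, nonnegativity of the Hessian of $w=u-v$ there, and concavity of $\det^{1/n}$ to get the pointwise bound $(f^{1/n}-g^{1/n})^n$) matches the paper's, but your attempt to run the argument directly at the almost-everywhere level, instead of reducing to the $C^2$ case, has genuine gaps. First, the area-formula step: $\nabla w=\nabla u-\nabla v$ is neither Lipschitz nor everywhere defined, so the Area Formula of \cite[Paragraph 3.3]{EG} (used in the footnote for a $C^1$ map) does not apply as cited; to get $|\nabla w(E)|\le \int_E \det \nabla^2 w$ you need the \emph{gradient} form of Alexandrov's theorem (differentiability of the subdifferential map a.e., not just the second-order expansion of function values stated in \cite[Paragraph 6.4]{EG}) together with a generalized area formula or a Lusin-type decomposition for a.e.-differentiable maps. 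This is true but is a nontrivial additional layer, and the same remark applies to your unproved claim that $\det\nabla^2 u=f$ and $\det\nabla^2 v=g$ a.e. (identification of the density of the absolutely continuous part of $\mu_u$ with the Alexandrov Hessian determinant). Second, your ``direct'' fix for the negligible set $N=(A\cap\{w=\Gamma_w\})\setminus E$ does not work as stated: from $\partial\Gamma_w(x_0)+\partial v(x_0)\subseteq\partial u(x_0)$ you only get $\partial\Gamma_w(x_0)\subseteq\partial u(x_0)-q_{x_0}$ with a translation $q_{x_0}\in\partial v(x_0)$ depending on the point, and a union of differently translated pieces of $\partial u(N)$ need not be null even though $|\partial u(N)|=\int_N f=0$. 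A correct repair is to set $h:=\Gamma_w+v$, note $\partial h(x)\subseteq\partial u(x)$ at contact points and $\mu_{\Gamma_w}\le\mu_h$ (superadditivity of Monge--Amp\`ere measures under sums), but this last inequality is itself an extra lemma usually proved by approximation.

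Your fallback (``smooth $u$ and $v$ and pass to the limit via Proposition \ref{uni}'') is in spirit the paper's proof, but as phrased it misses its key device. If you merely mollify $u$ and $v$, Proposition \ref{uni} only gives weak-$*$ convergence of the Monge--Amp\`ere measures $\det D^2(u\ast\rho_k)\,dx$ and $\det D^2(v\ast\rho_k)\,dx$, which is not enough to pass to the limit in the nonlinear right-hand side $\bigl((\det D^2u_k)^{1/n}-(\det D^2v_k)^{1/n}\bigr)^n\,{\mathbf 1}_{\{u_k-v_k=\Gamma_{u_k-v_k}\}}$. The paper instead exhausts $\Omega$ by smooth uniformly convex domains and \emph{re-solves} Dirichlet problems with smooth data $f_k\to f$, $g_k\to g$ strongly in $L^1_{\rm loc}$ and boundary values $u\ast\rho_k$, $v\ast\rho_k$ (using \cite[Theorem 17.23]{GT}), so that the right-hand sides converge strongly while the left-hand side is handled by Proposition \ref{uni} and the upper semicontinuity of the contact indicators \eqref{c}. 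So either carry out the a.e.\ argument with the missing ingredients supplied (strong Alexandrov theorem, generalized area formula, the density identification, and the $\mu_{\Gamma_w}\le\mu_{\Gamma_w+v}$ lemma), or follow the approximation scheme, but with the approximants constructed as solutions of auxiliary Dirichlet problems rather than by mollifying $u$ and $v$.
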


\begin{proof}
In case $u,v$ are of class $C^2$ inside $\Omega$, by \eqref{eq:order symm matrices} we have 
\[
0\le D^2 u(x)-D^2v(x)\qquad \text{for every } x\in \{u-v=\Gamma_{u-v}\},
\]
so using the monotonicity and the concavity of the function $\det^{1/n}$ on the cone of non-negative symmetric matrices we get
\[
0 \leq  \det(D^2 u-D^2v)\leq  \left( \big(\det D^2 u\big)^{1/n}-\big(\det D^2v \big)^{1/n} \right)^n\qquad
\text{ on } \{u-v=\Gamma_{u-v}\},
\]  
which combined with \eqref{misconv} gives the desired result.

Now, for the general case, we consider a sequence of smooth uniformly convex domains $\Omega_k$ increasing to $\Omega$,
two sequences of smooth functions $f_k$ and $g_k$ converging respectively to $f$ and $g$ in $L^1_{\rm loc}(\Omega)$,
and we solve 
\[
\begin{cases}
\det D^2 u_k =f_k \quad &\text{in $\Omega_k$}\\
u_k=u\ast \rho_{k} &\text{on $\partial \Omega_k$,}
\end{cases}
\qquad
\begin{cases}
\det D^2 v_k =g_k \quad &\text{in $\Omega_k$}\\
v_k=v\ast \rho_{k} &\text{on $\partial \Omega_k$,}
\end{cases}
\]
where $\rho_k$ is a smooth sequence of convolution kernels. In this way both $u_k$ and $v_k$ are smooth on $\overline\Omega_k$ \cite[Theorem 17.23]{GT},
and $\|u_k-u\|_{L^\infty(\Omega_k)}+\|v_k-v\|_{L^\infty(\Omega_k)} \to
0$ as $k \to \infty$.\footnote{
Indeed, first of all it is easy to see that $u_k$ (resp. $v_k$) converges uniformly to $u$ (resp. $v$)
both on $\partial \Omega_k$ and in any compact subdomain of $\Omega$.
Then, using for instance a contradiction argument,
one exploits the convexity of $u_k$ (resp. $v_k$) and $\Omega_k$,
and 
the uniform continuity of $u$ (resp. $v$),
to show that the convergence is actually uniform on the whole $\Omega_k$.}
Hence, also $\Gamma_{u_k-v_k}$ converges locally uniformly to $\Gamma_{u-v}$. Moreover, it follows easily
from the definition of contact set that
\begin{equation}\label{c}
\limsup_{k \to \infty} {\mathbf 1}_{\{u_k-v_k=\Gamma_{u_k-v_k}\}} \le {\mathbf 1}_{\{u-v=\Gamma_{u-v}\}}.
\end{equation}
We now observe that the previous step applied to $u_k$ and $v_k$ gives
\[
\mu_{\Gamma_{u_k-v_k}} \le
\Bigg( \big(\det D^2 u_k\big)^{1/n}-\big(\det D^2v_k \big)^{1/n} \Bigg)^n {\mathbf 1}_{\{u_k-v_k=\Gamma_{u_k-v_k}\}}\, dx,
\]
Thus, letting $k \to \infty$ and taking in account Proposition \ref{uni} and \eqref{c},
we obtain \eqref{gammauv}.
\end{proof}

\begin{proof}[Proof of Theorem \ref{conv}]
The $L^1_{\rm loc}$ convergence of $u_k $ (resp. $\nabla u_k$) to $u$ (resp. $\nabla u$) is easy and standard,
so we focus on the convergence of the second derivatives.

Without loss of generality we can assume that $\Omega'$ is convex, and that $\Omega'\subset\subset \Omega_k$
(since $\Omega_k \to \Omega$ in the Hausdorff distance, this is always true
for $k$ sufficiently large).
Fix $\e \in (0,1)$, let $\Gamma_{u-(1-\e)u_k}$
be the convex envelope of $u-(1-\e)u_k$ inside $\Omega'$ (see \eqref{eq:convex envelope}),
and define
$$
A_k^\e:=\{x \in \Omega'\,:\,u(x)-(1-\e)u_k(x)=\Gamma_{u-(1-\e)u_k}(x)\}.
$$
Since $u_k \to u$ locally uniformly,
$\Gamma_{u-(1-\e)u_k}$
converges uniformly to $\Gamma_{\e u}=\e u$ (as $u$ is convex) inside $\Omega'$. Hence,
by applying Proposition \ref{uni} and \eqref{gammauv} to $u$ and  $(1-\e)u_k$ inside $\Omega'$,
we get that
\[
\begin{split}
\e^n \int_{\Omega'} f &=\mu_{ \Gamma_{\e u}}(\Omega')\\
&\le \liminf_{k\to \infty} \mu_{\Gamma_{u-(1-\e)u_k}}(\Omega') \\
&\le \liminf_{k \to \infty} \int_{\Omega'\cap A_k^\e}\Big(f^{1/n}-(1-\e)f_k^{1/n}\Big)^n.
\end{split}
\] 
We now observe that, since $f_k$ converges to $f$ in $L^1_{\rm loc}(\Omega)$, we have
\[
\begin{split}
\left|\int_{\Omega'\cap A_k^\e}\Big(f^{1/n}-(1-\e)f_k^{1/n}\Big)^n -\int_{\Omega'\cap A_k^\e}\e^n f \right|&
\leq \int_{\Omega'}\left|\Big(f^{1/n}-(1-\e)f_k^{1/n}\Big)^n -\e^n f\right|  \to 0
\end{split}
\] 
as $k \to \infty$. Hence, combining the two estimates above, we immediately get
$$
\int_{\Omega'} f \leq \liminf_{k \to \infty}\int_{\Omega' \cap A_k^\e}  f,
$$
or equivalently
$$
\limsup_{k \to \infty} \int_{\Omega' \setminus A_k^\e}  f =0.
$$
Since $f \geq \lambda$ inside $\Omega$ (as a consequence of the fact that $f_k \geq \lambda$ inside $\Omega_k$),
this gives
\begin{equation}
\label{eq:meas to zero}
\lim_{k \to \infty} |\Omega' \setminus A_k^\e|=0 \qquad \forall\, \e \in (0,1).
\end{equation}
We now recall that, by the results in \cite{CA1,DepFi,DFS,S}, both $u$ and $(1-\e)u_k$ are strictly
convex and belong to $W^{2,1}(\Omega')$.
Hence we can apply \eqref{eq:order symm matrices2} to deduce that
\[
D^2 u - (1-\e)D^2u_k \geq 0 \qquad \text{a.e. on $A_k^\e$}.
\]
In particular, by \eqref{eq:meas to zero},
\[
|\Omega'\setminus\{ D^2u\ge (1-\e)D^2u_k\}|\to 0\qquad \text{as $k \to \infty$}.
\]
By a similar argument (exchanging the roles of $u$ and $u_k$)
\[
|\Omega'\setminus\{ D^2u_k \ge (1-\e)D^2u\}|\to 0\qquad \text{as $k \to \infty$}.
\]
Hence, if we call $B_k ^\e:=\left\{ x \in \Omega': \ (1-\e) D^2u_k \le D^2u \le \frac{1}{1-\e} D^2u_k\right\}$,
it holds
$$
\lim_{k \to \infty} |\Omega' \setminus B_k^\e|=0 \qquad \forall\, \e \in (0,1).
$$
Moveover, by \eqref{ulogu} applied to both $u_k$ and $u$, we have\footnote{If instead of
\eqref{ulogu} we only had uniform $L\log L$ a-priori estimates,
in place of H\"older inequality we could apply the elementary inequality $t\le\delta t \log (2+ t)+e^{1/\delta}$
with $t=|D^2 u-D^2 u_k|$ inside $\Omega'\setminus B^\e_k$,
and we would let first $k \to \infty$ and then send $\delta,\e\to 0$.}
\[
\begin{split}
\int_{\Omega'} |D^2 u-D^2 u_k|
&= \int_{\Omega'\cap B^\e_ k} |D^2 u-D^2 u_k|+\int_{\Omega'\setminus B^\e_k} |D^2 u-D^2 u_k|\\
&\le \frac{\e}{1-\e} \int_{\Omega'} |D^2 u|+ 
\|D^2 u-D^2u _k\|_{L^\gamma(\Omega')}|\Omega'\setminus B^\e_k|^{1-1/\gamma}\\
&\leq C\left(\frac{\e}{1-\e}+|\Omega'\setminus B^\e_k|^{1-1/\gamma}\right).
\end{split}
\]
Hence, letting first $k \to \infty$ and then sending $\e\to 0$, we obtain the desired result.
\end{proof}

\section{Proof of Theorem \ref{transport}}\label{sect:cor}

In order to prove Theorem \ref{transport}, we will need the following lemma (note that for the next result
we do not need to assume the convexity of the target domain):
\begin{lemma}\label{lemma:conv rhs}
Let $\O_1,\O_2\subset \R^n$ be two bounded domains, and let $f_k,g_k$ be a family of probability densities
such that $0<\l \leq f_k,g_k \le \Lambda$ inside $\O_1$ and $\O_2$ respectively.
Assume that $f_k \to f$ in $L^1(\Omega_1)$ and $g_k \to g$ in $L^1(\Omega_2)$,
and let $T_k:\O_1\to \O_2$ (resp. $T:\O_1\to \O_2$) be the (unique) optimal transport map for the
quadratic cost sending $f_k$ onto $g_k$ (resp. $f$ onto $g$).
Then 
$$
\frac{f_k}{g_k \circ T_k} \to \frac{f}{g\circ T} \qquad \text{in $L^1(\Omega_1)$}.
$$
\end{lemma}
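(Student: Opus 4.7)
The plan is to exploit the uniform bounds $0 < \lambda \le f_k, g_k \le \Lambda$ together with Brenier's theorem, which writes $T_k = \nabla\varphi_k$ and $T = \nabla\varphi$ for convex potentials. Since both source and target domains are bounded and the densities are uniformly bounded away from $0$ and $\infty$, the standard stability theory for optimal maps (combining uniform Lipschitz bounds on the potentials with the weak continuity of the Monge--Amp\`ere operator, i.e.\ Proposition \ref{uni}) yields, after passing to a subsequence, that $T_k \to T$ a.e.\ in $\Omega_1$. A Urysohn-type argument at the end reduces matters to proving convergence along any such subsequence, so we may work with this a.e.\ convergence throughout.

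The key step is to show that $g_k \circ T_k \to g \circ T$ in $L^1(\Omega_1)$. Write
\[
g_k\circ T_k - g\circ T = (g_k - g)\circ T_k + (g\circ T_k - g\circ T).
\]
For the first term, using that $(T_k)_\# f_k = g_k$ together with $f_k \ge \lambda$ we estimate
\[
\int_{\Omega_1}|g_k-g|\circ T_k\,dx \le \frac{1}{\lambda}\int_{\Omega_1}|g_k-g|\circ T_k\,f_k\,dx = \frac{1}{\lambda}\int_{\Omega_2}|g_k-g|\,g_k\,dy \le \frac{\Lambda}{\lambda}\|g_k - g\|_{L^1(\Omega_2)} \to 0.
\]
For the second term, given $\eta>0$ approximate $g$ in $L^1(\Omega_2)$ by a continuous, uniformly bounded function $\tilde g$ with $\|g - \tilde g\|_{L^1(\Omega_2)} < \eta$; then $\tilde g\circ T_k \to \tilde g\circ T$ a.e.\ and is uniformly bounded, so $\tilde g\circ T_k \to \tilde g\circ T$ in $L^1(\Omega_1)$ by dominated convergence, while the errors $\|(g - \tilde g)\circ T_k\|_{L^1}$ and $\|(g - \tilde g)\circ T\|_{L^1}$ are each bounded by $\Lambda\eta/\lambda$ by the same change-of-variables trick.

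Finally, using the algebraic identity
\[
\frac{f_k}{g_k\circ T_k} - \frac{f}{g\circ T} = \frac{f_k - f}{g_k\circ T_k} + f\,\frac{g\circ T - g_k\circ T_k}{(g_k\circ T_k)(g\circ T)},
\]
the first summand has $L^1$-norm at most $\|f_k - f\|_{L^1(\Omega_1)}/\lambda \to 0$, while the second has $L^1$-norm at most $(\Lambda/\lambda^2)\|g_k\circ T_k - g\circ T\|_{L^1(\Omega_1)} \to 0$ by the previous step. This yields the desired convergence along the subsequence, and hence along the full sequence by uniqueness of the limit.

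The main obstacle is precisely the middle step: since $g_k$ converges only in $L^1$ (not pointwise or uniformly), one cannot compose directly with $T_k$ and pass to the limit. The trick is to pay the cost of one factor of $f_k$ or $g_k$ (affordable because the densities are bounded below by $\lambda$) in order to invoke the push-forward relation $(T_k)_\# f_k = g_k$ and convert an integral in $\Omega_1$ into one in $\Omega_2$, where $L^1$ convergence of $g_k$ is directly available.
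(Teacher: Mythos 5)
Your proposal is correct and follows essentially the same route as the paper: the same device of paying a factor $f_k/\lambda$ to use $(T_k)_\#f_k=g_k$ and transfer the $L^1$ convergence of $g_k$ to $\Omega_2$, the same continuous approximation of $g$ to handle the composition with $T_k$ versus $T$, and the same final algebraic identity with the bounds $\lambda\le g_k\circ T_k,\,g\circ T$ and $f\le\Lambda$. The only cosmetic difference is that you work with a.e.\ convergence of $T_k$ along subsequences (plus dominated convergence) where the paper uses convergence in measure directly.
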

\begin{proof}
By stability of optimal transport maps (see for instance \cite[Corollary 5.23]{V})
and the fact that $f_k\geq \l$ (and so $f \geq \l$), we know that $T_k \to T$
in measure (with respect to Lebesgue) inside $\Omega$.

We claim that $g \circ T_k \to g \circ T$ in  $L^1(\Omega_1)$. Indeed
this is obvious if $g$ is uniformly continous
(by the convergence in measure of $T_k$ to $T$). In the general case we choose $g_\eta \in C(\overline{\Omega}_2)$ such that
$\|g-g_\eta\|_{L^1(\Omega_2)} \leq \eta$ and we observe that (recall that $f_k,f \geq \l$, $g_k,g\leq \L$,
and that by definition of transport maps we have $T_\#f_k=g_k$, $T_\#f=g$)
\begin{align*}
\int_{\Omega_1} |g \circ T_k - g \circ T|
&\leq \int_{\Omega_1} |g_\eta \circ T_k - g_\eta \circ T| + \int_{\Omega_1} |g_\eta \circ T_k - g \circ T_k| \frac{f_k}{\lambda}
+ \int_{\Omega_1} |g_\eta \circ T - g \circ T| \frac{f}{\lambda}\\
&=  \int_{\Omega_1} |g_\eta \circ T_k - g_\eta \circ T|+ \int_{\Omega_2} |g_\eta  - g | \frac{g_k}{\lambda}
+ \int_{\Omega_2} |g_\eta  - g | \frac{g}{\lambda}\\
& \leq  \int_{\Omega_1} |g_\eta \circ T_k - g_\eta \circ T|+2 \frac{\L}{\l}\,\eta.
\end{align*}
Thus
$$
\limsup_{k \to \infty}\int_{\Omega_1} |g \circ T_k - g \circ T| \leq 2 \frac{\L}{\l}\,\eta,
$$
and the claim follows by the arbitrariness of $\eta$.

Since
\begin{align*}
\int_{\Omega_1}|g_k\circ T_k - g \circ T|&
\leq \int_{\Omega_1}|g_k\circ T_k - g \circ T_k| \frac{f_k}{\l}+\int_{\Omega_1}|g\circ T_k - g \circ T|\\
&= \int_{\Omega_2}|g_k - g| \frac{g_k}{\l}+\int_{\Omega_1}|g\circ T_k - g \circ T|\\
&\leq \frac{\L}{\l}\|g_k-g\|_{L^1(\Omega_2)}+\int_{\Omega_1}|g\circ T_k - g \circ T|,
\end{align*}
from the claim above we immediately deduce that also $g_k\circ T_k \to g \circ T$ in $L^1(\Omega_1)$.

Finally, since $g_k,g \geq \l$ and $f \leq \L$,
\begin{align*}
\int_{\Omega_1}\left|\frac{f_k}{g_k\circ T_k} -\frac{f}{g \circ T}\right|&
\leq \int_{\Omega_1}\left|\frac{f_k-f}{g_k\circ T_k}\right|
+ \int_{\Omega_1}f\left|\frac{1}{g_k\circ T_k}-\frac{1}{g \circ T}\right|\\
&\leq \frac{1}{\l} \|f_k-f\|_{L^1(\Omega_1)}+ \L \int_{\Omega_1} \frac{|g_k\circ T_k - g \circ T|}{g_k\circ T_k \,g\circ T}\\
&\leq \frac{1}{\l} \|f_k-f\|_{L^1(\Omega_1)}+ \frac{\L}{\l^2} \|g_k\circ T_k - g \circ T\|_{L^1(\Omega_1)},
\end{align*}
from which the desired result follows.
\end{proof}

\begin{proof}[Proof of Theorem \ref{transport}]
Since $T_k$ are uniformly bounded in $W^{1,\gamma}(\Omega_1')$ for any $\Omega_1'\subset \subset \Omega$,
it suffices to prove that $T_k \to T$ in $W^{1,1}_{\rm loc}(\Omega_1)$.

Fix $x_0 \in \Omega_1$ and $r>0$ such that $B_r(x_0)\subset \Omega_1$. By compactness, it suffices to show that there is an open neighborhood
$\mathcal U_{x_0}$ of $x_0$ such that
$\mathcal U_{x_0} \subset B_r(x_0)$ and
$$
\int_{\mathcal U_{x_0}} |T_k - T|+ |\nabla T_k - \nabla T| \to 0.
$$
It is well-known \cite{CA3} that $T_k$ (resp. $T$) can be written as $\nabla u_k$ (resp. $\nabla u$) for some
strictly convex function
$u_k:B_r(x_0) \to \R$ (resp. $u:B_r(x_0) \to \R$). Moreveor, up to subtract a constant
to $u_k$ (which
will not change the transport map $T_k$),
one may assume that $u_k(x_0)=u(x_0)$ for all $k \in \N$.

Since the functions $T_k=\nabla u_k$ are bounded (as they take values in the bounded set $\Omega_2$),
by classical stability of optimal maps (see for instance \cite[Corollary 5.23]{V})
we get that $\nabla u_k \to \nabla u$ in $L^1_{loc}(B_{r}(x_0))$.
(Actually, if one uses \cite{CA3}, $\nabla u_k$ are locally uniformly H\"older maps, so they converge locally uniformly
to $\nabla u$.)
Hence, to conclude the proof we only need to prove the convergence of $D^2 u_k$ to $D^2 u$ in a neighborhood of $x_0$.

To this aim, we observe that, by strict convexity of $u$, we can find a linear function
$\ell(z)=a \cdot z +b$ such that the open convex set $Z:=\{ z \,:\, u(z)< u(x_0)+ \ell(z)\}$ is non-empty and compactly supported
inside $B_{r/2}(x_0)$. Hence, by the uniform convergence of $u_k$ to $u$
(which follows from the $L^1_{\rm loc}$ convergence of the gradients,
the convexity of $u_k$ and $u$, and the fact that $u_k(x_0)=u(x_0)$), and the fact that $\nabla u$
is transversal to $\ell$ on $\partial Z$, we get that $Z_k:=\{ z \,:\, u_k(z)< u_k(x_0)+ \ell(z)\}$
are non-empty convex sets which converge in the Hausdorff distance to $Z$.

Moreover, 
by \cite{CA3} the maps $v_k:=u_k - \ell$ solve in the Alexandrov sense
$$
\begin{cases}
\det D^2 v_k=\frac{f_k}{g_k \circ T_k} \quad &\text{in $Z_k$}\\
v_k=0 &\text{on $\partial Z_k$}
\end{cases}
$$
(here we used that the Monge-Amp\`ere measures associated to $v_k$ and $u_k$ are the same).
Therefore, thanks to Lemma \ref{lemma:conv rhs}, we can apply Theorem \ref{conv}
to deduce that $D^2 u_k \to D^2 u$ in any relatively compact subset of $Z$, which concludes the proof.
\end{proof}


\begin{thebibliography}{99}


\bibitem{CA1} {\sc L.Caffarelli:}
{\em A localization property of viscosity solutions to the Monge-Amp\`ere
equation and their strict convexity.}
Ann. of Math. (2), {\bf 131} (1990), no. 1, 129--134. 

\bibitem{CA3} {\sc L. Caffarelli:} {\em The regularity of mappings
with a convex potential.} J. Amer. Math. Soc., {\bf 5} (1992),
99--104.




\bibitem{DepFi} {\sc G. De Philippis, A. Figalli:} {\em $W^{2,1}$ regularity for solutions 
of the Monge-Amp\`ere equation.} Preprint 2011.


\bibitem{DFS} {\sc G. De Philippis, A. Figalli, O. Savin:} {\em A note on interior $W^{2,1+\e}$ estimates for 
the Monge-Amp\`ere equation.} Preprint 2012.

\bibitem{EG} {\sc L. C. Evans, R. F. Gariepy:}
{\em Measure theory and fine properties of functions.}
Studies in Advanced Mathematics. CRC Press, Boca Raton, FL, 1992. 


\bibitem{GT}
{\sc D. Gilbarg, N. S. Trudinger:}
{\em Elliptic partial differential equations of second order.}
Reprint of the 1998 edition. Classics in Mathematics. Springer-Verlag, Berlin, 2001.




\bibitem{G} {\sc C. Gutierrez:} {\em The Monge-Amp\'ere equation},
Progress in Nonlinear Differential Equations and their Applications,
44. Birkh\"auser Boston, Inc., Boston, MA, 2001.

  \bibitem{S} {\sc T. Schmidt:} {\em $W^{2,1+\e}$ estimates for 
the Monge-Amp\`ere equation.} Preprint 2012.


\bibitem{V} {\sc C. Villani:}
{\em Optimal Transport. Old and new.}
Grundlehren der Mathematischen Wissenschaften [Fundamental Principles of Mathematical Sciences],
338. Springer-Verlag, Berlin, 2009.


\end{thebibliography}
\end{document}